\theoremstyle{plain}
\newtheorem{theorem}{Theorem}
\newtheorem{lemma}{Lemma}
\newtheorem{remark}{Remark}
\newcommand{\shtm}{\,\widetilde{\triangledown}\,}
\newcommand{\trdens}[1]{\widetilde{\nabla}_{{#1}}}
\newcommand{\bbbn}{\mathbb{N}}
\newcommand{\bbbr}{\mathbb{R}}
\begin{document}
\title{A note on Fiedler value of classes with sublinear separators}
\author{J. Ne\v set\v ril \thanks{Supported by grant ERCCZ LL-1201 of the Czech Ministry of Education and CE-ITI of GA{\v C}R}\\
\small Computer Science Institute of Charles University (IUUK and ITI)\\
 \small    Malostransk\' e n\' am.25, 11800 Praha 1, Czech Republic\\
 \small    {\tt nesetril@kam.ms.mff.cuni.cz}
 \\
 \\
\and P. Ossona de Mendez\\
\small  Centre d'Analyse et de Math\'ematiques Sociales (CNRS, UMR 8557)\\
 \small   190-198 avenue de France, 75013 Paris, France\\
\small {\tt pom@ehess.fr}
}
\date{\today}
\maketitle

\begin{abstract}
The $n$-th Fiedler value of a class of graphs $\mathcal C$ is the maximum second eigenvalue $\lambda_2(G)$ of 
a graph $G\in\mathcal C$ with $n$ vertices. In this note we relate this value to shallow minors and, as a corollary,
we determine the right order of the $n$-th Fiedler value for some minor closed classes of graphs, including the class
of planar graphs.
\end{abstract}
\section{Introduction and Statement of Results}
The {\em Laplacian} $L(G)$ of a graph $G$ of order $n$ is the $n\times n$ matrix
with degrees on the diagonal and $-1$ for adjacent pairs of vertices (i.e.
$L(G)=D(G)-A(G)$). This matrix is real and symmetric hence has all of its $n$
eigenvalues real. As Laplacian matrices are positive
semi-definite, all the eigenavalues are non-negative. 
The all-one vector is clearly an eigenvector of this matrix,
with associated eigenvalue $0$. The second
smallest eigenvalue $\lambda_2$ of $L(G)$ is called the {\em algebraic
connectivity} of $G$ \cite{Fiedler1973}, or the {\em Fiedler value} of $G$
\cite{Spielman1996,Spielman2007284}.

Let $\mathcal C$ be a class of graphs. The {\em Fiedler value} of the class is the function
$$\lambda_{2\, {\rm max}}(\mathcal C,n)=\max_{G\in\mathcal C, |G|=n}\lambda_2(G),$$
where the maximum is taken over all graphs $G\in\mathcal C$ with $n$ vertices, i.e. with order $|G|=n$.

The Fiedler value of a graph $G$ is intensively studied and it has many applications.
For example, we shall make use of the following connection, established by Spielman and Teng,
 to embeddings of $G$ in the Euclidean space.
 
\begin{lemma}[Embedding lemma, \cite{Spielman1996,Spielman2007284}]
\label{lem:embed}
 $$\lambda_2(G)=\min\frac{\sum_{ij\in E(G)}\|\vec v_i-\vec
v_j\|^2}{\sum_{i\in V(G)}\|\vec v_i\|^2},$$
where the minimum is taken over all possible choices of 
$\vec{v_1},\dots,\vec{v_n}\in\bbbr^m$ such that $\sum_i\vec v_i=\vec 0$.
\end{lemma}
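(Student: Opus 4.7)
The plan is to reduce the statement to the classical Rayleigh--Ritz characterisation of the second smallest eigenvalue of $L(G)$. The crucial identity is
$$x^{\top} L(G)\, x \;=\; \sum_{ij\in E(G)} (x_i-x_j)^2,$$
valid for every $x=(x_1,\dots,x_n)\in\mathbb{R}^n$ and obtained by directly expanding $x^{\top}(D(G)-A(G))x$. Since $L(G)$ is real, symmetric, and positive semidefinite with the all-one vector $\mathbf{1}$ in its kernel, Courant--Fischer immediately yields the scalar ($m=1$) case of the lemma,
$$\lambda_2(G)\;=\;\min_{\substack{x\neq 0\\ \mathbf 1^{\top} x=0}}\frac{\sum_{ij\in E(G)}(x_i-x_j)^2}{\sum_{i} x_i^2},$$
with the minimum attained by a Fiedler eigenvector. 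This already shows that the infimum on the right-hand side of the lemma is at most $\lambda_2(G)$.

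For the reverse inequality with arbitrary $m$, I would exploit the fact that the quadratic expression in the statement splits coordinatewise. Writing $\vec v_i=(v_i^{(1)},\dots,v_i^{(m)})$ and setting $x^{(k)}=(v_1^{(k)},\dots,v_n^{(k)})\in\mathbb R^n$, the numerator rewrites as $\sum_{k=1}^{m}(x^{(k)})^{\top}L(G)\,x^{(k)}$ and the denominator as $\sum_{k=1}^{m}\|x^{(k)}\|^2$, while the vector constraint $\sum_i\vec v_i=\vec 0$ becomes $\mathbf 1^{\top}x^{(k)}=0$ for every coordinate $k$. Applying the scalar inequality coordinate by coordinate gives $(x^{(k)})^{\top}L(G)\,x^{(k)}\ge\lambda_2(G)\,\|x^{(k)}\|^2$ for each $k$; summing over $k$ produces the desired lower bound on the Rayleigh-type quotient.

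I do not anticipate a genuine obstacle, as the argument is essentially a two-line consequence of the spectral theorem. The only subtlety worth noting is the disconnected case, in which $\lambda_2(G)=0$ and the bound is vacuous: equality is then attained by a $\{+a,-b\}$-valued embedding into $\mathbb R$ that is constant on each part of a bipartition of the component set and has zero sum. Beyond this degenerate situation the proof is a clean reduction of the vector formulation to its classical scalar counterpart.
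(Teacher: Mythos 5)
Your argument is correct: the coordinatewise reduction to the Rayleigh--Ritz characterisation of $\lambda_2(L(G))$, via the identity $x^{\top}L(G)x=\sum_{ij\in E(G)}(x_i-x_j)^2$ and the orthogonality of each coordinate vector $x^{(k)}$ to $\mathbf{1}$, is exactly the standard proof of this lemma. The paper itself gives no proof --- it imports the statement from Spielman and Teng --- so there is nothing to diverge from; your write-up would serve as a self-contained justification, with the only cosmetic caveat being that one should exclude the all-zero choice of $\vec v_1,\dots,\vec v_n$ so that the quotient is defined.
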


In particular, if $\phi:V(G)\rightarrow\mathbb C$ and $\sum_{u\in
V(G)}\phi(u)=0$ we have 
$$\lambda_2(G)\leq\frac{\sum_{uv\in
E(G)}|\phi(u)-\phi(v)|^2}{\sum_{u\in V(G)}|\phi(u)|^2}.$$

Barri{\`e}re {\em et al.\ }obtained the
following bound for the class of all $K_h$ minor free graphs:
$$\lambda_{2\, {\rm max}}(K_h\text{ minor free},n)\leq\begin{cases}
h-2+O(\frac{1}{\sqrt{n}})&\text{if }4\leq h\leq 9\\
\gamma h\sqrt{\log h}+O(\frac{1}{\sqrt n})&\text{otherwise}
\end{cases}
$$

In this note we determine the right order of $\lambda_{2\, {\rm max}}(\mathcal C,n)$ for the class
$\mathcal C$ of $K_h$-minor free graphs and also for the class of planar graphs (both results extend
the results of \cite{Barriere2011}). We also extend \cite{Barriere2011} to a much broader variety of classes, such as classes 
defined by forbidden subdivisions or graphs with bounded degrees. There is nothing special about minor closed classes here. 
These extensions are motivated by \cite{Sparsity}.
Particularly we prove the following:

\begin{theorem}
\label{thm:1}
For every integer $h\geq 2$ we have
$$h-2\leq \lambda_{2\, {\rm
max}}(K_h\text{ minor free},n)\leq h-2+O\left(\frac{1}{\sqrt{n}}\right)
$$
\end{theorem}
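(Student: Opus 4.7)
For the lower bound I would exhibit a single explicit witness: take $G_n := K_{h-2} + \overline{K_{n-h+2}}$, the join of a clique on $h-2$ ``hub'' vertices with an independent set of size $n-h+2$. The graph $G_n$ is $K_h$-minor-free, because any candidate family of $h$ pairwise-adjacent branch sets would have to use all $h-2$ hubs and then contain at least two further branch sets drawn from the independent part, which cannot be pairwise adjacent. Using the standard formula for Laplacian spectra under a join, one reads off the spectrum $\{0,(h-2)^{n-h+1},n^{h-2}\}$, whence $\lambda_2(G_n)=h-2$.

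For the upper bound I would apply the Embedding Lemma (Lemma~\ref{lem:embed}) to a test vector built from a balanced separator. By the Alon--Seymour--Thomas theorem, every $K_h$-minor-free graph on $n$ vertices admits a set $S$ with $|S|=O_h(\sqrt{n})$ such that $V(G)\setminus S = A\sqcup B$ with $|A|,|B|\leq \tfrac{2n}{3}$ and no $A$--$B$ edges. Define $\phi(a)=|B|$ on $A$, $\phi(b)=-|A|$ on $B$, and $\phi(s)=0$ on $S$, so that $\sum_u \phi(u)=0$. The denominator of the Rayleigh quotient equals $|A||B|(|A|+|B|)=\Theta(n^3)$, and since $\phi$ is constant on $A$, on $B$ and on $S$ while no edge crosses $A$--$B$, the numerator equals $e(A,S)|B|^2+e(B,S)|A|^2$, which for $|A|,|B|=\Theta(n)$ reduces to at most $e(S,V\setminus S)/n$ times $\Theta(n)$. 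Thus the Embedding Lemma gives
\[
\lambda_2(G)\leq (1+o(1))\,\frac{e(S,V\setminus S)}{n}.
\]

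The remaining, and main, step is to bound $e(S,V\setminus S)$ sharply. A naive bound via the Kostochka--Thomason edge-density inequality yields only $O(h\sqrt{\log h})\cdot n$, reproducing the weaker constant in Barri\`ere et al. To extract the tight leading term $h-2$, I would invoke the shallow-minor viewpoint announced in the abstract: the ``local'' structure around a sublinear separator in a $K_h$-minor-free graph is controlled not by the global average degree but by the edge density of a shallow minor, which is essentially $h-2$ (since a denser shallow minor would reveal $K_h$ itself). Concretely, one refines the choice of $S$ so that $e(S,V\setminus S)\leq (h-2)\,n+O(\sqrt n)$, after which dividing by $n$ gives exactly $\lambda_2(G)\leq h-2+O(1/\sqrt{n})$. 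The main obstacle is precisely this sharpening of the edge count around $S$; replacing the global degeneracy bound by a shallow-minor bound is where the paper's shallow-minor machinery is essential, and it is the technically delicate part of the argument.
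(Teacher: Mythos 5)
Your lower bound is correct and is exactly the paper's: the witness $K_{h-2}\oplus \overline{K_{n-h+2}}$ with $\lambda_2=h-2$ (the paper computes this via Lemma~\ref{lem:join} rather than quoting the join spectrum, but the content is the same). The upper bound, however, has two genuine gaps. First, your test vector $\phi=|B|$ on $A$, $-|A|$ on $B$, $0$ on $S$ does \emph{not} give $\lambda_2(G)\leq(1+o(1))\,e(S,V\setminus S)/n$. The Rayleigh quotient is $\bigl(e(A,S)|B|^2+e(B,S)|A|^2\bigr)/\bigl(|A||B|(|A|+|B|)\bigr)$, and if, say, $|A|=2n/3$, $|B|=n/3$ and all crossing edges meet $B$, this equals $2e(S,V\setminus S)/n$ --- a factor $2$ too large, which would only yield $2(h-2)+O(1/\sqrt n)$. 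To get each crossing edge to contribute weight exactly $1$ and each vertex outside $S$ to contribute exactly $1$ to the denominator, you need $|\phi|\equiv 1$ off $S$ together with $\sum_u\phi(u)=0$; with only two parts of unequal sizes this is impossible even over $\mathbb C$ (from $|A|z_1+|B|z_2=0$ and $|z_1|=|z_2|=1$ one gets $|A|=|B|$). This is precisely why the paper separates \emph{twice}, producing four parts $Z_{1,1},Z_{1,2},Z_{2,1},Z_{2,2}$, and uses Lemma~\ref{lem:fourpoints} to place four unit complex numbers with $\sum_{i,j}|Z_{i,j}|z_{i,j}=0$; that is what delivers the clean bound $\lambda_2(G)\leq e(S,V\setminus S)/(n-|S|)$ of Lemma~\ref{lem:sep}.

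Second, the step you yourself flag as ``the main obstacle'' --- the sharp count $e(S,V\setminus S)\leq (h-2)n+O(\sqrt n)$ --- is asserted but not proved, and it is the actual mathematical core of the theorem. (Note also that no ``refined choice of $S$'' is needed: the bound holds for \emph{every} set $S$ of size $O(\sqrt n)$.) The paper's argument (Lemma~\ref{lem:bipdens}) is a greedy construction worth knowing: split $V\setminus S$ into the vertices with at most $\omega-1$ neighbours in $S$ (where $\omega=\omega(G\shtm\frac12)\leq h-1$), which contribute at most $(\omega-1)n$ edges, and the vertices with at least $\omega$ neighbours in $S$; each of the latter is used as a subdivision vertex to add a new edge to a depth-$\frac12$ topological minor $H$ on vertex set $S$, and if some vertex cannot add a new edge then its $\geq\omega$ neighbours span a clique in $H$, so $H\oplus K_1\in G\shtm\frac12$ contains $K_{\omega+1}$, a contradiction. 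Hence there are at most $\trdens{1/2}(G)\,|S|=O(\sqrt n)$ such vertices and they contribute only $O(\sqrt n)$ edges. Without both of these ingredients your argument does not close.
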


\begin{theorem}
\label{thm:2}
Let $g\in\bbbn$. Then
$$2+\Omega\left(\frac{1}{n^2}\right)\leq \lambda_{2\, {\rm max}}(\text{genus
}g,n)\leq 2+O\left(\frac{1}{\sqrt{n}}\right)$$
\end{theorem}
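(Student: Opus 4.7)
The theorem has two independent bounds, which I would establish separately.

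\textbf{Lower bound.} Since planar graphs lie in every class of graphs of genus $g\ge 0$, it suffices to produce a planar graph $G_n$ of order $n$ with $\lambda_2(G_n)\ge 2+\Omega(1/n^2)$. I would take $G_n$ to be the \emph{bipyramid over a cycle}: two apex vertices $u,v$ together with a cycle $w_0,\dots,w_{n-3}$ in which every $w_j$ is adjacent to both $u$ and $v$. This is a planar triangulation (the $1$-skeleton of a convex polyhedron). Using the $\mathbb{Z}_{n-2}$ rotational symmetry on the $w$-vertices I would diagonalise $L(G_n)$ by the discrete Fourier transform: for each $k\in\{1,\dots,n-3\}$, the vector $\phi_k$ with $\phi_k(u)=\phi_k(v)=0$ and $\phi_k(w_j)=e^{2\pi ikj/(n-2)}$ automatically annihilates both apex rows (as $\sum_j\phi_k(w_j)=0$), and a short direct computation gives $L(G_n)\phi_k=\bigl(4-2\cos(2\pi k/(n-2))\bigr)\phi_k$. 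In the complementary three-dimensional subspace spanned by $e_u$, $e_v$, $\mathbf{1}_{\{w_j\}}$ a $3\times 3$ reduction produces the remaining eigenvalues $0$, $n-2$, $n$, all of which exceed $4-2\cos(2\pi/(n-2))$ for $n$ large. Therefore
\[
\lambda_2(G_n)=4-2\cos\!\bigl(2\pi/(n-2)\bigr)=2+4\sin^2\!\bigl(\pi/(n-2)\bigr)=2+\Omega(1/n^2).
\]

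\textbf{Upper bound.} The plan is to apply the Embedding Lemma (Lemma~\ref{lem:embed}) to a test vector $\phi$ built from the sublinear separator structure of a genus-$g$ graph. The structural inputs are Euler's formula, giving $|E|\le 3n+6(g-1)$, and the Gilbert--Hutchinson--Tarjan separator theorem, which provides a decomposition $V=A\cup B\cup S$ with $|S|=O(\sqrt{gn})$, $|A|,|B|\le 2n/3$, and no edge between $A$ and $B$. The prototype embedding is the one that already saturates the value $2$ on $K_{2,n-2}$: set $\phi(v)=1$ for $v\in A$, $\phi(v)=-|A|/|B|$ for $v\in B$, and $\phi(v)=0$ for $v\in S$. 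A direct calculation shows $\phi\perp\mathbf 1$ and that the Rayleigh quotient reduces to $\bigl(e(A,S)+(|A|/|B|)^2\,e(B,S)\bigr)/(|A|n/|B|)$; for the $K_{2,n-2}$ extremal this is exactly $2$, and in the general bounded-genus case the difference is controlled by $|S|$ together with the Euler degree bound, yielding an error $O(\sqrt{g/n})=O(1/\sqrt n)$. As the paper's introduction suggests, the cleanest organisation is to prove a single master lemma bounding $\lambda_{2\,\mathrm{max}}(\mathcal C,n)$ in terms of the density of shallow minors of $\mathcal C$ together with a separator-size error, and then specialise it to the genus-$g$ class (just as Theorem~\ref{thm:1} specialises it to $K_h$-minor-free graphs).

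\textbf{Main obstacle.} The hard part is the upper bound, specifically isolating the sharp constant $2$ rather than the weaker estimate $\bar d(G)\lesssim 6$ suggested by any naive random or indicator embedding. The two-level indicator embedding above cleanly yields $2$ only in the $K_{2,n-2}$-like situation in which all edges are incident to the separator; handling arbitrary genus-$g$ graphs requires showing that the high-degree vertices can be absorbed into the $O(\sqrt{gn})$-separator, so that the remaining vertices contribute only short, low-density edges. Pinning the error down to $O(1/\sqrt n)$ (rather than merely $o(1)$) is precisely what forces the use of the $\sqrt n$-separator theorem in combination with the Euler density bound, and this is where the shallow-minor analysis announced in the abstract should do the real work.
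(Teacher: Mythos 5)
Your lower bound is correct and complete: the bipyramid $\overline{K_2}\oplus C_{n-2}$ is planar, your Fourier diagonalisation giving $\lambda_2=4-2\cos\bigl(2\pi/(n-2)\bigr)=2+\Theta(1/n^2)$ checks out, and this is an acceptable substitute for the paper's example $K_2\oplus P_{n-2}$, which has $\lambda_2=4-2\cos\bigl(\pi/(n-1)\bigr)$. The upper bound, however, has two genuine gaps, both of which you sense but do not close. First, the two-valued test vector ($\phi=1$ on $A$, $\phi=-|A|/|B|$ on $B$, $0$ on $S$) does \emph{not} reduce the Rayleigh quotient to $e(S,V-S)/(n-|S|)$: since $e(A,B)=0$ the quotient equals $\frac{|B|}{|A|}\cdot\frac{e(A,S)}{|A|+|B|}+\frac{|A|}{|B|}\cdot\frac{e(B,S)}{|A|+|B|}$, and when $|A|=2|B|$ with most separator edges landing in $B$ you lose a factor of $2$, yielding only $4+O(1/\sqrt n)$. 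The paper avoids this by separating twice --- splitting $V-S_0$ into $Z_1,Z_2$ and each $Z_i$ again into $Z_{i,1},Z_{i,2}$ --- and then invoking Lemma~\ref{lem:fourpoints} to choose four \emph{unit-modulus} complex values $z_{i,j}$ with $\sum_{i,j}|Z_{i,j}|\,z_{i,j}=0$; since every nonzero value of $\phi$ then has modulus exactly $1$ and no edge joins distinct parts, the quotient in Lemma~\ref{lem:embed} becomes exactly $e(S,V-S)/(n-|S|)$ with $|S|\le s(n)+2s(2n/3)=O(\sqrt{n})$ (Lemma~\ref{lem:sep}).

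Second, even granting that reduction, Euler's formula only gives $e(S,V-S)\le\|G\|\le 3n+O(g)$, i.e.\ the constant $3$, not $2$. The step you flag as the main obstacle is resolved in the paper not by ``absorbing high-degree vertices into the separator'' but by Lemma~\ref{lem:bipdens2}: a graph of genus $g$ has no $K_{3,4g+3}$ subgraph, and a greedy construction of a shallow topological minor on vertex set $S$, combined with a count of its triangles, shows that the number of vertices of $V-S$ having at least three neighbours in $S$ is $O\bigl(\trdens{1/2}(G)^2\,|S|\bigr)=O(\sqrt n)$; the remaining vertices each send at most $2$ edges into $S$, whence $e(V-S,S)\le 2n+O(\sqrt n)$. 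Without an argument of this kind controlling the vertices with three or more neighbours in the separator, your sketch cannot get below the constant $3$, so the heart of the upper bound is still missing.
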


This will be a consequence of a more general statement using the following
definitions and notations (see \cite{ECM2009,Sparsity}).
Let $r$ be a half-integer and let $G$ be a graph. A graph $H$ is a {\em shallow
topological minor} of $G$ at {\em depth} $r$ if one can find
as a subgraph of $G$ a subdivision of $H$ where all the edges are subdivided at
most $2r$ times (we call such a subdivision a {\em $\leq 2r$-subdivision} of
$H$). The set of all the shallow topological minors of $G$ at depth $r$ is
denoted by $G\shtm r$ and we denote by $\trdens{r}$ the {\em topological grad}
of $G$ with depth $r$, which is the maximum of $\|H\|/|H|$ over all graphs $H\in G\shtm r$. 
Here, for a graph $H=(V,E)$ we put $|H|=|V|$ and $\|H\|=|E|$.
Also, for a class of graphs $\mathcal C$, we define $\mathcal C\shtm r=\bigcup_{G\in\mathcal C}G\shtm r$ 
and  $\trdens{r}(\mathcal C)=\sup_{G\in\mathcal C}\trdens{r}(G)$. 

\begin{theorem}
\label{thm:main}
Let $\mathcal C$
be a monotone class with sub-linear separators and bounded $\trdens{1/2}$. Let
$s(n)$ denote the maximum size of a vertex separator of graphs $G\in\mathcal C$ of order at most
$n$. Then
$$\lambda_{2\, {\rm max}}(\mathcal C,n)\leq
\omega\left(\mathcal C\shtm\frac{1}{2}\right)-1+O\left(\frac{s(n)}{n}\right).
$$
\end{theorem}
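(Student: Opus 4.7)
The plan is to bound $\lambda_2(G)$ using the Embedding Lemma (Lemma \ref{lem:embed}) via an explicit test function $\phi : V(G) \to \mathbb R$. Write $\omega = \omega(\mathcal C \shtm \tfrac{1}{2})$.

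The core observation is the following single-vertex construction: if some $u \in V(G)$ has $\deg_G(u) \le \omega - 1$, then setting $\phi(u) = 1$ and $\phi(v) = -1/(n-1)$ for $v \neq u$ gives a centered test function whose Rayleigh quotient equals
\begin{equation*}
\frac{\deg_G(u)\cdot (n/(n-1))^2}{1 + 1/(n-1)} \;=\; \deg_G(u)\cdot \frac{n}{n-1} \;\le\; \omega - 1 + O(1/n),
\end{equation*}
already stronger than the claim. To handle graphs where no such low-degree vertex exists---which may occur when the maximum density $\trdens{1/2}(\mathcal C)$ exceeds $\omega - 1$---the plan is to first apply the sublinear separator hypothesis and monotonicity of $\mathcal C$ to produce a small exceptional set $S$ of size $O(s(n))$ after whose removal a suitable low-degree vertex can be found in $G \setminus S$. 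Running the single-vertex construction there, and extending $\phi$ across $S$ in a way that preserves centering, picks up an extra error proportional to the edges incident to $S$; normalized by $|V(G)| - |S| = \Theta(n)$, this yields exactly the stated $O(s(n)/n)$ correction.

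If even after any bounded number of separator excisions no low-degree vertex can be located, one falls back on a separator-based $\phi$ that is $+1$ on one side $A$ and $-1$ on the other side $B$ of the partition $V(G) \setminus S = A \sqcup B$. Then edges within $A$ or within $B$ contribute $0$, edges between $A$ and $B$ are absent by the separator property, and the Rayleigh quotient is $(e(A,S) + e(B,S))/(|A|+|B|)$. Under $\omega(\mathcal C \shtm \tfrac12) = \omega$ combined with bounded $\trdens{1/2}$, this cut-size can be bounded by $O(s(n))$, again giving a contribution that is absorbed into the $O(s(n)/n)$ error.

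The main obstacle will be the structural step: proving that under the joint hypotheses, one of the two constructions applies with the advertised degree and cut-size bounds. The hypothesis on $\trdens{1/2}$ alone gives only $\deg \lesssim \trdens{1/2}(\mathcal C)$, which may strictly exceed $\omega - 1$; the sharpening to $\omega - 1$ requires exploiting the shallow-topological-minor clique cap via an extraction argument---if too many vertices had degree $\ge \omega$ with suitably dense $2$-neighborhoods, one could realize $K_{\omega+1}$ as a $\leq 1$-subdivision, contradicting $\omega(\mathcal C \shtm \tfrac12) = \omega$. The detailed combinatorics, likely drawing on techniques developed in \cite{Sparsity}, is the technical heart of the proof; the separator recursion, centering of $\phi$, and boundary error bookkeeping are routine once this structural input is in place.
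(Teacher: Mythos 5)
There is a genuine gap, and it sits exactly where you locate the ``technical heart.'' Your single-vertex construction gives Rayleigh quotient $\deg_G(u)\cdot n/(n-1)$, so it needs a vertex whose degree \emph{in $G$} is at most $\omega-1$; excising a small set $S$ first does not relax this, because edges from $u$ into $S$ still contribute $\approx 1$ each to the numerator (while the denominator stays $\approx 1$), so an extra $|S|$ of them would cost $O(s(n))$, not $O(s(n)/n)$. Such a vertex need not exist: for $K_h$-minor-free graphs one has $\omega(\mathcal C\shtm\frac{1}{2})-1\leq h-2$, yet there are $K_h$-minor-free graphs of minimum degree $\Theta(h\sqrt{\log h})$, so for large $h$ \emph{every} vertex violates the degree cap, and no extraction argument can produce one. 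Bounded $\trdens{1/2}$ together with the clique cap on $G\shtm\frac{1}{2}$ controls averages, not the minimum degree. Your fallback is also quantitatively wrong: you claim $e(A,S)+e(B,S)=O(s(n))$, but $e(V-S,S)$ is in general $\Theta(n)$. For $K_2\oplus \overline{K_{n-2}}$ (which is $K_4$-minor free and has $\lambda_2=2$) any small separator must contain the two dominating vertices, which alone contribute $2(n-|S|)$ cross edges; if your cut bound held, the argument would prove $\lambda_2=o(1)$, which is false. A further, more minor issue: the test function $+1$ on $A$, $-1$ on $B$ is not centered unless $|A|=|B|$, which is why the paper separates twice, into four parts, and uses Lemma~\ref{lem:fourpoints} to place unit complex values on the four parts so that the function is centered and has modulus $1$ everywhere off $S$.

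The correct deployment of your extraction idea is an averaged statement about the cut, not a minimum-degree statement. The paper's Lemma~\ref{lem:bipdens} shows that at most $\trdens{1/2}(G)\,|S|$ vertices of $V-S$ can have $\geq\omega$ neighbours in $S$ (each such vertex either adds a new edge to a depth-$\frac12$ topological minor built on $S$, or exhibits $K_{\omega+1}\in G\shtm\frac12$), and the edges they carry are absorbed into an $O(|S|)$ term via $\trdens{0}$; the remaining vertices contribute at most $\omega-1$ cross edges each. Hence $e(V-S,S)\leq(\omega-1)n+O(|S|)$, and dividing by $n-|S|$ (the squared norm of the centered, unit-modulus test function from Lemma~\ref{lem:sep}) gives $\omega-1+O(s(n)/n)$. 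So your instinct about which clique-extraction argument is needed is right, but it must be run against the separator $S$ as the small side of a bipartite cut, not used to hunt for a low-degree vertex.
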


\section{Proofs}
In this section, we shall prove Theorem~\ref{thm:main}, then Theorem~\ref{thm:1} and Theorem~\ref{thm:2}.
Following the proof of Barri{\`e}re {\em et al.\ }\cite{Barriere2011},
we state two lemmas allowing to bound $\lambda_2(G)$ by the density of edges incident to a small
subset of vertices of $G$.
\begin{lemma}
\label{lem:fourpoints}
Let $n_{1,1},n_{1,2},n_{2,1},n_{2,2}$ be positive integers such that
\begin{align*}
n_{1,1}&\leq n_{1,2}\leq 2n_{1,1}\\
n_{2,1}&\leq n_{2,2}\leq 2n_{2,1}\\
n_{1,1}+n_{1,2}&\leq n_{2,1}+n_{2,2}\leq 2(n_{1,1}+n_{1,2})
\end{align*}
Then there exist $z_{1,1},z_{1,2},z_{2,1},z_{2,2}\in\mathbb S^2$
(where $\mathbb S^2=\{z\in\mathbb C: |z|=1\})$ such that
$$n_{1,1}z_{1,1}+n_{1,2}z_{1,2}+n_{2,1}z_{2,1}+n_{2,2}z_{2,2}=0$$
\end{lemma}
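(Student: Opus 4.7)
The plan is to group the four terms into two pairs and reduce the problem to matching the moduli of two complex sums. Write $a = n_{1,1}z_{1,1}+n_{1,2}z_{1,2}$ and $b = n_{2,1}z_{2,1}+n_{2,2}z_{2,2}$; the goal is to pick unit complex numbers so that $b = -a$. Since we may rotate the entire configuration once $|a|$ is fixed, it suffices to (i) pick $|a|\in [n_{1,2}-n_{1,1},\,n_{1,1}+n_{1,2}]$ and $|b|\in [n_{2,2}-n_{2,1},\,n_{2,1}+n_{2,2}]$ with $|a|=|b|$, and then (ii) rotate the second pair to align $b$ opposite to $a$.

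For step (i), I would use the elementary fact that for positive reals $p\leq q$, the modulus $|pe^{i\theta}+q|$ varies continuously with $\theta$ from $p+q$ at $\theta=0$ down to $q-p$ at $\theta=\pi$, and therefore assumes every value in $[q-p,\,p+q]$ by the intermediate value theorem. Hence any real number in the interval $I_1=[n_{1,2}-n_{1,1},\,n_{1,1}+n_{1,2}]$ is achievable as $|a|$, and any number in $I_2=[n_{2,2}-n_{2,1},\,n_{2,1}+n_{2,2}]$ as $|b|$. Thus it only remains to show $I_1\cap I_2\neq\emptyset$.

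To verify this overlap, set $A_-=n_{1,2}-n_{1,1}$, $A_+=n_{1,1}+n_{1,2}$, $B_-=n_{2,2}-n_{2,1}$, $B_+=n_{2,1}+n_{2,2}$. The hypothesis $n_{1,2}\leq 2n_{1,1}$ gives $A_-\leq A_+/2$, and similarly $B_-\leq B_+/2$. The hypothesis $A_+\leq B_+\leq 2A_+$ then forces $B_-\leq B_+/2\leq A_+$ and $A_-\leq A_+\leq B_+$, so $\max(A_-,B_-)\leq \min(A_+,B_+)$ and the two intervals indeed overlap.

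Pick any $r$ in the common interval; choose $z_{1,1},z_{1,2}\in\mathbb S^2$ with $n_{1,1}z_{1,1}+n_{1,2}z_{1,2}=r$ (possible after rotating the pair obtained from the IVT argument so that the sum is the positive real $r$), and similarly choose $z_{2,1},z_{2,2}\in\mathbb S^2$ with $n_{2,1}z_{2,1}+n_{2,2}z_{2,2}=-r$. Summing gives $0$, as required. The only real content is the overlap verification above; the rest is a continuity/rotation argument. I expect no serious obstacle, just the bookkeeping of the three interval inequalities.
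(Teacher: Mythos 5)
Your argument is correct, and it takes a genuinely different (and cleaner) route than the paper's. Your key observation is the standard fact that $\{pz+qw : z,w\in\mathbb S^2\}$ is the full annulus $\{\zeta\in\mathbb C : |q-p|\leq|\zeta|\leq p+q\}$, obtained from the intermediate value theorem applied to $|pe^{i\theta}+q|^2=p^2+q^2+2pq\cos\theta$ followed by a global rotation; this reduces the lemma to checking that the intervals $[n_{1,2}-n_{1,1},\,n_{1,1}+n_{1,2}]$ and $[n_{2,2}-n_{2,1},\,n_{2,1}+n_{2,2}]$ intersect, and your overlap verification ($A_-\leq A_+/2$, $B_-\leq B_+/2\leq A_+$, $\min(A_+,B_+)=A_+$) uses exactly the three hypotheses and is correct; the final step $b=-a$ is then immediate. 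The paper instead fixes the two barycenters in advance, taking $x_1=2/3$ and $x_2=-\frac{n_1}{n_2}x_1\in[-2/3,-1/3]$ so that $n_1x_1+n_2x_2=0$, and then shows each $x_i$ is realizable as the normalized weighted average of two unit vectors with weight ratio $n_{i,2}/n_{i,1}\in[1,2]$, via a continuity argument for an explicit ratio function $g(x,z)=\frac{f_x(z)-x}{x-z}$ evaluated along the circle (with $g(x_1,\pm 1)=5,1/5$ and $g(x_2,\pm1)$ bracketing $[1,2]$). Both proofs share the pairing decomposition and rest on a one-parameter continuity argument, but yours isolates the reusable geometric fact (the annulus description of two-term unit-vector sums) and makes the role of the third hypothesis transparent as an interval-overlap condition, whereas the paper buries the same content in the specific choice of the anchor $2/3$ and the bounds on $g$. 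Either proof is acceptable; no gap in yours.
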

\begin{proof}
Let $n_1=n_{1,1}+n_{1,2}, n_2=n_{2,1}+n_{2,2}$ and $n=n_1+n_2$.
Define the real numbers $x_1=2/3$ and $x_2=-\frac{n_2}{n_1}z_1$, so that
$n_1x_1+n_2x_2=0$ and $-2/3\leq x_2\leq -1/3$.

For $0<x<1$ define the function $f_x:\mathbb S^2\rightarrow\mathbb S^2$ such
that $f_x(z)$ is the intersection of the unit circle and of the line through $z$
and $x$. Let $g:]0,1[\times \mathbb S^2\rightarrow\mathbb R$ be defined by
$g(x,z)=\frac{f_x(z)-x}{x-z}$. Notice that $g$ is continuous.
As $g(x_1,1)=5$ and $g(x_1,-1)=1/5$ there exists $z_{1,1}$ such that
$g(x_1,z_{1,1})=n_{1,2}/n_{1,1}$. Also, as $g(x_2,1)=1/2$ and $g(x_2,-1)=2$
there exists $z_{2,1}$ such that $g(x_2,z_{2,1})=n_{2,2}/n_{2,1}$.
Let $z_{1,2}=f_{x_1}(z_{1,1})$ and $z_{2,2}=f_{x_2}(z_{2,1})$. Then
$x_1=\frac{n_{1,1}z_{1,1}+n_{1,2}z_{1,2}}{n_1}$ and
$x_2=\frac{n_{2,1}z_{2,1}+n_{2,2}z_{2,2}}{n_2}$. Thus
$n_{1,1}z_{1,1}+n_{1,2}z_{1,2}+n_{2,1}z_{2,1}+n_{2,2}z_{2,2}=0$.
\end{proof}

\begin{lemma}
\label{lem:sep}
Let $\mathcal C$ be a monotone class of graphs and let $s(n)$
denote the maximum size of a vertex separator of a graph $G\in\mathcal C$ with
order at most $n$.

Then, for every graph $G\in\mathcal C$ with order $n$
there exists a subset $S\subset V(G)$ of cardinality at most $s(n)+2s(2n/3)$
such that:
$$\lambda_2(G)\leq \frac{e(S,V-S)}{n-|S|}.$$
\end{lemma}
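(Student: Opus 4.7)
The plan is to apply the complex form of Lemma~\ref{lem:embed} with a carefully chosen test function $\phi$. I will first build a small set $S$ together with a partition of $V(G)\setminus S$ into four blocks such that (i) in $G-S$ there are no edges between distinct blocks, and (ii) the four block sizes satisfy the hypotheses of Lemma~\ref{lem:fourpoints}. The unit complex numbers produced by that lemma will then give a constant value of $\phi$ on each block (with $\phi\equiv 0$ on $S$) having zero mean, and every edge contributing nontrivially to the numerator of the variational formula will have at least one endpoint in $S$.

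To build $S$ I iterate the separator hypothesis twice. First take a balanced separator $S_0$ of $G$ with $|S_0|\leq s(n)$, so that $V(G)\setminus S_0=A\cup B$ with no edges between $A$ and $B$ and $|A|,|B|\leq 2n/3$; WLOG $|A|\leq |B|$. By monotonicity of $\mathcal C$, the induced subgraphs $G[A]$ and $G[B]$ lie in $\mathcal C$ with orders at most $2n/3$, so each admits a balanced separator of size at most $s(2n/3)$. This yields $S_A\subset A$ and $S_B\subset B$ with $A\setminus S_A=A_1\cup A_2$ and $B\setminus S_B=B_1\cup B_2$ and no edges between $A_1$ and $A_2$ (resp.\ $B_1$ and $B_2$). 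Setting $S:=S_0\cup S_A\cup S_B$ gives $|S|\leq s(n)+2s(2n/3)$, and $G-S$ has no edges between any two distinct blocks among $A_1,A_2,B_1,B_2$.

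The main technical point is verifying that the block sizes $(|A_1|,|A_2|,|B_1|,|B_2|)$, reordered so that each pair is non-decreasing, satisfy the three inequalities of Lemma~\ref{lem:fourpoints}. The $2/3$-balance of each sub-separator yields the intra-side ratios $|A_2|/|A_1|,|B_2|/|B_1|\leq 2$ (and similarly $|B|/|A|\leq 2$) up to an $O(s(n)/n)$ correction. Since the inequalities in Lemma~\ref{lem:fourpoints} are tight at ratio $2$, this $O(s(n))$ slack is the main obstacle: I would absorb it either by demanding slightly more balanced separators (which the separator hypothesis still delivers for classes with sublinear $s(n)$), or by moving a bounded number of vertices from the largest block into $S$ to enforce the inequalities exactly. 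In either case the extra cost remains $O(s(n))$ and is absorbed into the stated bound on $|S|$.

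With unit complex numbers $z_{i,j}$ from Lemma~\ref{lem:fourpoints}, define $\phi(v)=z_{i,j}$ on the corresponding block and $\phi(v)=0$ for $v\in S$. Then $\sum_v \phi(v)=\sum_{i,j}n_{i,j}z_{i,j}=0$, so Lemma~\ref{lem:embed} applies. Edges inside a single block or inside $S$ contribute $0$ to the numerator; edges between distinct blocks do not exist in $G-S$; and each edge with exactly one endpoint in $S$ contributes $|z_{i,j}|^2=1$. Hence the numerator equals $e(S,V-S)$ while the denominator equals $n-|S|$, yielding the desired bound $\lambda_2(G)\leq e(S,V-S)/(n-|S|)$.
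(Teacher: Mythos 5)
Your proof is correct and follows essentially the same route as the paper: two rounds of balanced separation, Lemma~\ref{lem:fourpoints} to place the four blocks on the unit circle with zero weighted sum, and the complex form of Lemma~\ref{lem:embed}. The balance issue you flag when verifying the third hypothesis of Lemma~\ref{lem:fourpoints} (after removing the sub-separators, the pair sums $|Z_1|-|S_1|$ and $|Z_2|-|S_2|$ need not keep their ratio in $[1,2]$) is genuine and is silently glossed over in the paper's proof; your fix of shifting $O(s(n))$ vertices into $S$ repairs it for every application in the paper, at the cost of technically relaxing the stated cardinality bound $s(n)+2s(2n/3)$ to $O(s(n))$.
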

\begin{proof}
Let $S_0$ be a vertex separator of $G$ of size at most $s(n)$ and let
$(Z_1,Z_2)$ be a partition of $V-S_0$ such that $|Z_1|\leq |Z_2|\leq 2|Z_1|$
and no edge exists between $Z_1$ and $Z_2$.
Let $S_1$ (resp. $S_2$) be separators of size at most $s(2n/3)$ of $G[Z_1]$
(resp. $G[S_2]$), let $(Z_{1,1},Z_{1,2})$ (resp. $(Z_{2,1},Z_{2,2})$) be a partition of 
$Z_1$ (resp. $Z_2$) such that  $|Z_{i,1}|\leq |Z_{i,2}|\leq 2|Z_{i,1}|$ and
no edge exists between $Z_{i,1}$ and $Z_{i,2}$ in $G[Z_i]-S_i$.
According to Lemma~\ref{lem:fourpoints}, there exists four complex numbers
$z_{1,1},z_{1,2},z_{2,1}$ and $z_{2,2}$ with $|z_{i,j}|=1$ and
$\sum_{i=1}^2\sum_{j=1}^2\, |Z_{i,j}|\,z_{i,j}=0$.
Define $\phi:V(G)\rightarrow\mathbb C$ as follows
$$
\phi(v)=\begin{cases}
z_{i,j}&\text{if }v\in Z_{i,j}\\
0&\text{otherwise}
\end{cases}
$$
Then we have, according to Lemma~\ref{lem:embed}:
\begin{align*}
\lambda_2(G)\leq \frac{\sum_{uv\in E(G)}|\phi(u)-\phi(v)|^2}{\sum_{u\in
V(G)}|\phi(u)|^2}=\frac{e(S,V-S)}{n-|S|},
\end{align*}
where $S=S_0\cup S_1\cup S_2$ has cardinality at most $s(n)+2s(2n/3)$.
\end{proof}

\begin{remark}
If $G$ has maximum average degree $d$ then $\lambda_2(G)\leq d(d+1)n$. 
This also follows from Lemma~\ref{lem:sep}.
Indeed,
$G$ has a proper coloration with $d+1$ colors. If $S$ is the union of the $d$
smallest color classes, then $V(G)-S$ is disconnected hence may be easily split
into four parts having approximately the same size. 
\end{remark}

We now give a general bound for the size of a bipartite subgraph of a graph
$G$ in terms of the maximum average degree and maximum clique size of shallow
topological minors of $G$.

\begin{lemma}
\label{lem:bipdens}
Let $A,B$ be disjoint vertices of a graph $G$, with $|A|\geq |B|$ and
$n=|A|+|B|$. Then 
$$
e(A,B)\leq (\omega(G\shtm 1/2)-1)\,n+(\trdens{0}(G)-\omega(G\shtm
1/2)+1)(\trdens{1/2}(G)+1)\,|B|.$$
In particular, if $G\in\mathcal C$ and $\mathcal C$ is a minor closed class with 
maximum average degree $d=2\trdens{0}(\mathcal C)$ and
clique number $\omega=\omega(\mathcal C)$, we get:
 $$
e(A,B)\leq (\omega-1)\,n+(d/2+1)(d/2+1-\omega)\,|B|.$$
\end{lemma}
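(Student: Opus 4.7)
Set $\omega := \omega(G \shtm 1/2)$, $d_0 := \trdens{0}(G)$, $d' := \trdens{1/2}(G)$, and for each $b \in B$ let $d_b := |N(b) \cap A|$. The key structural observation I would use is that since every subgraph of $G$ belongs to $G \shtm 1/2$, we have $\omega(G) \leq \omega$; in particular, for every $b \in B$ with $d_b \geq \omega$, the induced subgraph $G[N(b) \cap A]$ is $K_\omega$-free (otherwise, together with $b$ it would produce a $K_{\omega+1}$ subgraph of $G$). Hence each such $b$ admits a non-edge pair $\{u_b, v_b\} \subseteq N(b) \cap A$ of $G[A]$.

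Decomposing $d_b = \min(d_b, \omega - 1) + \max(d_b - \omega + 1, 0)$ and summing yields $e(A,B) \leq (\omega - 1)|B| + T$, with $T := \sum_{b \in B_h}(d_b - \omega + 1)$ and $B_h := \{b \in B : d_b \geq \omega\}$. The plan is to bound $T$ by combining two density inequalities. The first, applied to the subgraph $G[A \cup B_h] \in G \shtm 0$, reads
\[
|E(G[A])| + e(A, B_h) + |E(G[B_h])| \leq d_0 \, (|A| + |B_h|).
\]
The second uses the shallow topological minor $H \in G \shtm 1/2$ on vertex set $A$ obtained by adding to $G[A]$, for each $b \in B_h$, the edge $u_b v_b$ realized as the suppression of $b$ in the length-two path $u_b, b, v_b$ of $G$. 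Since each $u_b v_b$ is a non-edge of $G[A]$ by construction, the added edges are new and the $\trdens{1/2}$-density bound yields
\[
|E(G[A])| + |B_h| \leq d' \, |A|.
\]

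Combining these two inequalities (eliminating $|E(G[A])|$, isolating $e(A, B_h)$, subtracting $(\omega - 1)|B_h|$ to recover $T$, and using $|B_h| \leq |B|$) will give by routine algebra $T \leq (\omega - 1)|A| + (d_0 - \omega + 1)(d' + 1)|B|$. Substituting into $e(A,B) \leq (\omega - 1)|B| + T$ and using $(\omega - 1)(|A| + |B|) = (\omega - 1) n$ produces the claim. The main obstacle will be the distinctness requirement in the shallow-minor construction: the pairs $\{u_b, v_b\}$ must be simultaneously distinct across $b \in B_h$ so that $H$ is a simple graph with $|E(H)| = |E(G[A])| + |B_h|$. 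I expect this to be handled by a Hall-type system-of-distinct-representatives argument, combined with Turán's theorem applied to the $K_\omega$-free neighborhoods $G[N(b)]$ (which produces at least $\binom{d_b}{2}/(\omega-1)$ non-edge choices per $b$, growing with the excess).
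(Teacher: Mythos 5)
Your overall strategy inverts the roles of $A$ and $B$ relative to what is needed, and the concluding ``routine algebra'' step does not go through. The paper partitions the \emph{larger} side $A$ by degree into $B$: low-degree vertices contribute at most $(\omega-1)|A_1|\leq(\omega-1)n$, and the set $A_2$ of vertices with at least $\omega$ neighbours in $B$ is shown to satisfy $|A_2|\leq\trdens{1/2}(G)\,|B|$ by greedily extracting, for each $x\in A_2$, a new edge between two of its neighbours \emph{in $B$}, thereby building a shallow topological minor on vertex set $B$; the density bound $e(A_2,B)\leq\trdens{0}(G)(|A_2|+|B|)$ is then applied to a set of size $O(|B|)$, which is exactly why the error term is proportional to $|B|$. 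You instead partition $B$ by degree into $A$ and build the minor $H$ on vertex set $A$. This breaks the argument in two ways. First, your construction extracts only \emph{one} edge $u_bv_b$ per vertex $b\in B_h$, so your second inequality only controls $|B_h|$ (by $\trdens{1/2}(G)|A|$, which is weaker than the trivial $|B_h|\leq|B|\leq|A|$); it says nothing about the excess $T=\sum_{b\in B_h}(d_b-\omega+1)$, which is governed by the total degree surplus of $B_h$, not by its cardinality. Second, the proposed elimination is not a valid manipulation: your first inequality is an upper bound on $|E(G[A])|+e(A,B_h)$ and your second is an upper bound on $|E(G[A])|$, and two upper bounds cannot be subtracted to produce an upper bound on $e(A,B_h)$ (you would need a \emph{lower} bound on $|E(G[A])|$, and $G[A]$ may be edgeless). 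The only valid conclusion is $e(A,B_h)\leq\trdens{0}(G)(|A|+|B_h|)$, whose leading term $\trdens{0}(G)\,|A|$ is strictly worse than the required $(\omega-1)|A|$ whenever $\trdens{0}(G)>\omega(G\shtm\frac{1}{2})-1$; this happens, e.g., for graphs containing a large balanced complete bipartite subgraph, where $\trdens{0}$ is of order $t$ while $\omega(G\shtm\frac{1}{2})$ is only of order $\sqrt{t}$. Since the correction term in the target is only $O(|B|)$ and $|B|$ may be far smaller than $|A|$, no algebra recovers the claimed bound from your two inequalities.

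The distinctness/SDR issue you flag is real but secondary; even granting that all pairs $\{u_b,v_b\}$ are distinct, the scheme fails for the reasons above. To repair the proof, keep your (correct) observation that a neighbourhood of size at least $\omega$ cannot span a clique without creating a $K_{\omega+1}$ in $G\shtm\frac{1}{2}$, but apply it in the other direction: classify the vertices of $A$ by their degree into $B$, and run the greedy edge-extraction so that the auxiliary graph lives on $B$ and bounds the number of high-degree vertices of $A$ by $\trdens{1/2}(G)|B|$.
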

\begin{proof}
Let $\omega=\omega(G\shtm
\frac{1}{2})$. Partition $A$ into $A_1$ and $A_2$ such that $A_1$ contains the
vertices with degree at most $\omega-1$ and $A_2$ contains the vertices
with degree at least $\omega$. 

Consider any linear ordering $x_1,\dots,x_p$ of $A_2$. 
We construct $H\in G\shtm\frac{1}{2}$ as follows. At the beginning, $H$ is the
empty graph with vertex set $B$. For each vertex $x_i$ in $A_2$, if $x_i$ has
two neighbours $u,v$ in $B$ that are not adjacent in $H$ we (choose one such
pair of vertices and) make them adjacent in $H$ and we continue with the next
vertex of $A_2$. If we cannot continue, this means that all the neighbours of
$x_i$ are adjacent in $H$. Then by construction we have $H\oplus K_1\in
G\shtm\frac{1}{2}$ although $\omega(H\oplus K_1)>\omega$, a contradiction. 
Hence we can continue until $A_2$ is exhausted.
Then we obtain $H\in G\shtm\frac{1}{2}$ such that $\|H\|=|A_2|$ and $|H|=|B|$.
Hence we have $|A_2|\leq \trdens{1/2}(G) |B|$ and 
$$e(A_2,B)\leq \|G[A_2\cup B]\|\leq 
\trdens{0}(G)(|A_2|+|B|).$$
As the maximum degree of vertices in $A_1$ is $\omega-1$ we have
$e(A_1,B)\leq (\omega-1)(n-|A_2|-|B|)$.
Altogether, we get
$$
e(A,B)=e(A_1,B)+e(A_2,B)\leq
(\omega-1)n+(\trdens{0}(G)-\omega+1)(\trdens{1/2}(G)+1)|B|. $$
\end{proof}

\begin{proof}[Proof of Theorem~\ref{thm:main}]
According to Lemma~\ref{lem:sep} there exists, for every graph $G\in\mathcal C$
with order $n$, a subset $S\subset V(G)$ of cardinality at most $s(n)+2s(2n/3)$
such that
$\lambda_2(G)\leq \frac{e(S,V-S)}{n-|S|}$.
According to Lemma~\ref{lem:bipdens}, we have 
$$e(V-S,S)\leq (\omega(G\shtm 1/2)-1)\,n+(\trdens{0}(G)-\omega(G\shtm
1/2)+1)(\trdens{1/2}(G)+1)\,|S|.$$
As $s(n)=o(n)$, it follows that
$$\lambda_{2\, {\rm max}}(\mathcal C,n)\leq \omega\left(\mathcal
C\shtm\frac{1}{2}\right)-1+O\left(\frac{s(n)}{n}\right).$$
\end{proof}

The lower bound of Theorem~\ref{thm:1} follows from the following easy construction.

\begin{lemma}
\label{lem:join}
Let $H_1,H_2$ be graphs and let $H_1\oplus H_2$ denote the complete join of
$H_1$ and $H_2$. Then
 $$\lambda_2(H_1\oplus H_2)=\min(\lambda_2(H_1)+|H_2|,\lambda_2(H_2)+|H_1|).$$
\end{lemma}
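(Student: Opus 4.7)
The plan is to diagonalize $L(H_1\oplus H_2)$ explicitly, read off the spectrum, and then extract $\lambda_2$. Set $n_i=|H_i|$. Listing vertices of $H_1$ first, the Laplacian of the join takes the block form
$$L(H_1\oplus H_2)=\begin{pmatrix}L(H_1)+n_2 I & -J\\ -J^T & L(H_2)+n_1 I\end{pmatrix},$$
where $J$ is the $n_1\times n_2$ all-ones matrix. The $n_{3-i}I$ shifts come from every vertex of $H_i$ acquiring $n_{3-i}$ new neighbours, while the $-J$ blocks encode the complete bipartite edges.

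First I would handle eigenvectors obtained by zero-padding. If $v$ is an eigenvector of $L(H_i)$ with $\mathbf{1}_{n_i}^{T}v=0$, then the off-diagonal block annihilates the padded vector, so it remains an eigenvector of $L(H_1\oplus H_2)$, with eigenvalue shifted by $n_{3-i}$. Since eigenvectors of a symmetric matrix for distinct eigenvalues are orthogonal, this covers every eigenvector of $L(H_i)$ outside the span of $\mathbf{1}_{n_i}$, contributing the $(n_1-1)+(n_2-1)$ eigenvalues $\lambda_j(H_i)+n_{3-i}$ for $i\in\{1,2\}$ and $j\in\{2,\dots,n_i\}$.

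Next I would handle the remaining two-dimensional subspace of vectors constant on each side, $(a\mathbf{1}_{n_1},b\mathbf{1}_{n_2})$. This subspace is invariant under $L(H_1\oplus H_2)$, and a direct $2\times 2$ computation reduces the eigenvalue equation to $n_2(a-b)=\lambda a$ and $n_1(b-a)=\lambda b$, whose characteristic equation $\lambda^{2}-(n_1+n_2)\lambda=0$ yields the two roots $0$ and $n_1+n_2$. Combining both families exhausts the $n_1+n_2$ eigenvalues, so the smallest positive one is
$$\min\bigl(\lambda_2(H_1)+n_2,\ \lambda_2(H_2)+n_1,\ n_1+n_2\bigr).$$

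To clinch the claimed formula I would invoke the standard bound $\lambda_2(H_i)\leq n_i$ (every Laplacian eigenvalue of an $n$-vertex graph lies in $[0,n]$), which ensures that the term $n_1+n_2$ is never strictly smaller than the other two candidates and may be dropped. There is no real obstacle: the only care point is when some $H_i$ is disconnected, in which case $\lambda_2(H_i)=0$ and the formula produces $n_{3-i}$, which is consistent because $H_1\oplus H_2$ is itself connected and $n_{3-i}$ is genuinely an eigenvalue supplied by the zero-padded family.
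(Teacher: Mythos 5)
Your proposal is correct and follows essentially the same route as the paper: block-decompose the Laplacian of the join, observe that zero-padded eigenvectors orthogonal to the all-ones vector survive with eigenvalues shifted by $n_{3-i}$, and account for the remaining two-dimensional invariant subspace giving eigenvalues $0$ and $n_1+n_2$. Your explicit justification that the candidate $n_1+n_2$ can be discarded (via $\lambda_2(H_i)\leq n_i$) is a small step the paper leaves implicit, but the argument is the same.
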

\begin{proof}
Let $G=H_1\oplus H_2$ be the complete join of $H_1$ and $H_2$. Then
$$
L(G)=\begin{pmatrix}
L(H_1)+|H_2|I&-J\\
-J&L(H_2)+|H_1|I
\end{pmatrix}
$$
Hence if $x_1$ is an eigenvector of
$L(H_1)$ with eigenvalue $\alpha_1$ and if  $x_2$  is an
eigenvector of $L(H_2)$ with eigenvalue $\alpha_2$, both being orthogonal to
the all-one vectors, we have:
\begin{align*}
L(G)\begin{pmatrix}
x_1\\0
\end{pmatrix}
&=\begin{pmatrix}
L(H_1)+|H_2|I&-J\\
-J&L(H_2)+|H_1|I
\end{pmatrix}\begin{pmatrix}
x_1\\0
\end{pmatrix}
=(\alpha_1+|H_2|)
\begin{pmatrix}
x_1\\0
\end{pmatrix}
\intertext{and}
L(G)\begin{pmatrix}
0\\x_2
\end{pmatrix}
&=\begin{pmatrix}
L(H_1)+|H_2|I&-J\\
-J&L(H_2)+|H_1|I
\end{pmatrix}\begin{pmatrix}
0\\x_2
\end{pmatrix}
=(\alpha_2+|H_1|)
\begin{pmatrix}
0\\x_2
\end{pmatrix}
\intertext{Moreover, if $x$ is the vector with $|H_1|$ first entries equal to
$|H_2|$ and the remaining $|H_2|$ entries equal to $-|H_1|$ we have}
L(G)x&=n x
\end{align*}
With the all-one vector, which is an eigenvector of $L(G)$ with associated
eigenvalue $0$, we have determined the full spectrum of $G$. It follows that the second
smallest eigenvalue of $G$ is 
$$\lambda_2(G)=\min(\lambda_2(H_1)+|H_2|,\lambda_2(H_2)+|H_1|).$$
\end{proof}

Hence we have, for $n>h$ (as $G\oplus K_1$ is $K_{h+1}$-minor free if $G$ is
$K_h$-minor free): $$\lambda_{2\, {\rm max}}(K_{h+1}\text{ minor free},n+1)\geq
\lambda_{2\, {\rm max}}(K_{h}\text{ minor free},n)+1$$
\begin{proof}[Proof of Theorem~\ref{thm:1}]
The upper bound comes from Theorem~\ref{thm:main}. According to
Lemma~\ref{lem:join} we have, for $n\geq h$:
 $$ \lambda_{2\, {\rm
max}}(K_{h}\text{ minor free},n)\geq\lambda_2(K_{h-2}\oplus(n-h+2)K_1)=h-2.
$$
\end{proof}

\begin{lemma}
\label{lem:bipdens2}
Let $A,B$ be disjoint vertices of a graph $G$, with $|A|\geq |B|$ and
$n=|A|+|B|$. Let $p\in\bbbn$ be such that $K_{3,p}$ is not a subgraph of
$G$. 
Then 
$$
e(A,B)\leq
2n+(\trdens{0}(G)-2)((p-1)\trdens{1/2}(G)^2+\trdens{1/2}(G)+1)|B|.
$$
\end{lemma}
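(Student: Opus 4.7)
The plan is to mirror the proof of Lemma~\ref{lem:bipdens}, replacing the clique-size argument by the weaker $K_{3,p}$-freeness hypothesis. First I would split $A=A_1\cup A_2$ according to whether a vertex of $A$ has at most $2$ or at least $3$ neighbours in $B$. The elementary bounds $e(A_1,B)\leq 2|A_1|$ and $e(A_2,B)\leq\trdens{0}(G)(|A_2|+|B|)$, together with $|A_1|=n-|B|-|A_2|$, collapse into
$$e(A,B)\leq 2n+(\trdens{0}(G)-2)\,(|A_2|+|B|),$$
exactly as in Lemma~\ref{lem:bipdens}. It therefore suffices to establish $|A_2|\leq\bigl(\trdens{1/2}(G)+(p-1)\trdens{1/2}(G)^2\bigr)|B|$.

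For this, I would run the same greedy as in Lemma~\ref{lem:bipdens}: enumerate $A_2=\{x_1,\dots,x_m\}$ and build $H\in G\shtm\frac{1}{2}$ on vertex set $B$ by adding, whenever possible, an edge between two not-yet-adjacent neighbours in $B$ of the current $x_i$, using $x_i$ as the subdivision vertex. Vertices $x_i$ for which this succeeds are in bijection with $E(H)$, so their number is at most $\trdens{1/2}(G)|B|$. Each remaining (``bad'') $x_i$ has $N(x_i)\cap B$ a clique of size $\geq 3$ in $H$, hence contains a triangle of $H$. Choosing one such triangle $T_{x_i}$ per bad $x_i$ and invoking the hypothesis that $G$ contains no $K_{3,p}$, at most $p-1$ bad $x_i$ can share a given triangle; so the number of bad vertices is at most $(p-1)\,T(H)$, where $T(H)$ is the triangle count of $H$.

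The only technical step --- and the one I expect to be the main obstacle --- is bounding $T(H)$ linearly in $|B|$. Here I would use that every subgraph of $H$ is itself in $G\shtm\frac{1}{2}$ (simply restrict the given subdivision to the chosen edges), so $\trdens{0}(H)\leq\trdens{1/2}(G)=:d$. Applied to $H[N_H(u)]$ for each $u\in B$, this bounds the number of triangles of $H$ through $u$ by $d\deg_H(u)$; summing over $u$ and dividing by $3$ yields $T(H)\leq\tfrac{2}{3}d\|H\|\leq d^2|B|$. Putting the two contributions together gives $|A_2|\leq\bigl(d+(p-1)d^2\bigr)|B|$, so $|A_2|+|B|\leq\bigl((p-1)d^2+d+1\bigr)|B|$, and substitution into the earlier estimate delivers the claimed inequality. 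A degeneracy/orientation argument on $H$ (using that $H$ is $2d$-degenerate and counting triangles by their lowest vertex in an acyclic orientation of outdegree $\leq d$) would work equally well in place of the direct neighbourhood computation.
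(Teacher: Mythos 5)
Your proposal is correct and follows essentially the same route as the paper's proof: the same degree-$3$ threshold partition of $A$, the same greedy construction of $H\in G\shtm\frac{1}{2}$ on $B$, the same assignment of each ``bad'' vertex to a triangle of $H$ used at most $p-1$ times by $K_{3,p}$-freeness, and a linear-in-$|B|$ triangle count. Your way of bounding the triangle count (via $\|H[N_H(u)]\|\leq\trdens{1/2}(G)\deg_H(u)$, giving $T(H)\leq\tfrac{2}{3}\trdens{1/2}(G)^2|B|$) is in fact slightly tighter than the paper's stated bound of $2\trdens{0}(H)^2|H|$ and delivers the claimed constant without the factor-of-$2$ slack that the paper's intermediate estimate would otherwise introduce.
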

\begin{proof}
 Partition $A$ into $A_1$ and $A_2$ such that $A_1$ contains the
vertices with degree at most $2$ and $A_2$ contains the vertices
with degree at least $3$. 

Consider any linear ordering $x_1,\dots,x_p$ of $A_2$.
We construct a partition $Z_0,Z_1,\dots,Z_{p-1}$ of $A_2$,
$p-1$ sets $T_1,\dots,T_{p-1}$ of triples of vertices in $B$ and a graph
 $H\in G\shtm\frac{1}{2}$ as follows. At the beginning, $H$ is the
empty graph with vertex set $B$. For each vertex $x_i$ in $A_2$, if $x_i$ has
two neighbours $u,v$ in $B$ that are not adjacent in $H$ we (choose one such
pair of vertices and) make them adjacent in $H$, put $x_i$ in $Z_0$ and 
continue with the next vertex of $A_2$. If $x_i$ has three neighbours $u,v,w$
such that $\{u,v,w\}$ is not in $Z_1$, we put $\{u,v,w\}$ in $Z_i$ and continue
with the next vertex in $A_2$. Otherwise, we try to find a triples of neighbours
of $x_i$ not in $Z_2,Z_3,\dots, Z_{p-1}$. With this construction, all the
vertices of $A_2$ are exhausted for otherwise we would exhibit a $K_{3,p}$
subgraph of $G$.

Then we obtain $H\in G\shtm\frac{1}{2}$ such that $\|H\|=|Z_0|$ and $|H|=|B|$.
Hence we have $|Z_0|\leq \trdens{1/2}(G) |B|$ and the number of triangles in $H$
is at most $2\trdens{0}(H)^2\,|H|\leq 2\trdens{1/2}(G)^2\,|B|$. Each of the sets
$Z_1,\dots,Z_{p-1}$ contains only triples corresponding to triangles of $H$.
Hence for $1\leq i\leq p-1$ we have $|Z_i|\leq 2\trdens{1/2}(G)^2\,|B|$.
Altogether, we get
$$|A_2|\leq \trdens{1/2}(G)(1+(p-1)\trdens{1/2}(G)) |B|.$$
Moreover
$$e(A_2,B)\leq \|G[A_2\cup B]\|\leq 
\trdens{0}(G)(|A_2|+|B|).$$
As the maximum degree of vertices in $A_1$ is $2$ we have
$e(A_1,B)\leq 2(n-|A_2|-|B|)$.
As $e(A,B)=e(A_1,B)+e(A_2,B)$, we get
$$
e(A,B)\leq
2n+(\trdens{0}(G)-2)((p-1)\trdens{1/2}(G)^2+\trdens{1/2}(G)+1)|B|.
$$
\end{proof}
We deduce the Theorem~\ref{thm:2}, which is an extension of the inequalities obtained by
Barri{\`e}re {\em et al.\ }for planar graphs \cite{Barriere2011}.
\begin{proof}[Proof of Theorem~\ref{thm:2}]
According to Lemma~\ref{lem:sep} there exists, for every graph $G$ of genus $g$
with order $n$, a subset $S\subset V(G)$ of cardinality at most
$s(n)+2s(2n/3)=O(\sqrt{n})$ such that
$\lambda_2(G)\leq \frac{e(S,V-S)}{n-|S|}$.
As $G$ has genus $g$, it does not contain $K_{3,4g+3}$ as a
subgraph. Hence, according to Lemma~\ref{lem:bipdens2}, we have 
$e(V-S,S)\leq 2n+O(\sqrt{n})$.
It follows that
$$\lambda_{2\, {\rm max}}(\text{genus }g,n)\leq
2+O\left(\frac{1}{\sqrt{n}}\right).$$

For the lower bound, consider the planar graph $K_2\oplus P_{n-2}$, for which
$\lambda_2=4-2\cos\bigl(\frac{\pi}{n-1}\bigr)=2+\Theta\bigl(\frac{1}{n^2})$.
\end{proof}
\section{Concluding Remarks}

Remark that the same kind of argument as in the proof of Theorem~\ref{thm:2}
could be applied to prove that graphs that do not
contain $K_{p,q}$ for some $p\leq q$ but have bounded $\trdens{1/2}$ and
sub-linear separators actually have $\lambda_2$ bounded by $p-1+o(1)$ (as
$n\rightarrow\infty$).

An interesting problem is to characterize properties of a class by means of
separating and spectral properties, see e.g. Problem 16.2 of \cite{Sparsity}.
In a sense, this note may be seen as a step in this direction.
 
 \providecommand{\noopsort}[1]{}\providecommand{\noopsort}[1]{}

\end{document}